\documentclass[12pt]{article}
\usepackage[a4paper,margin=0.8in]{geometry}

\usepackage[colorlinks,citecolor=magenta,linkcolor=black]{hyperref}
\pdfpagewidth=\paperwidth \pdfpageheight=\paperheight
\usepackage{amsfonts,amssymb,amsthm,amsmath,eucal,tabu,url}
\usepackage{pgf}
 \usepackage{array}
 \usepackage{tikz-cd}
 \usepackage{pstricks}
 \usepackage{pstricks-add}
 \usepackage{pgf,tikz}
 \usetikzlibrary{automata}
 \usetikzlibrary{arrows}
 \usepackage{indentfirst}
 \pagestyle{myheadings}
\usepackage{tabularx} 
%*****************************************************************************
% Style

%*****************************************************************************
% Theorems

\theoremstyle{plain}
\newtheorem{thm}{Theorem}[section]
\newtheorem{theorem}[thm]{Theorem}
\newtheorem*{theoremA}{Main Theorem}

\newtheorem{proposition}[thm]{Proposition}
\newtheorem{corollary}[thm]{Corollary}

\theoremstyle{definition}
\newtheorem{definition}[thm]{Definition}
\newtheorem{remark}[thm]{Remark}

\newtheorem{problem}[thm]{Problem}

\newtheorem{thevarthm}[thm]{\varthmname}

\newenvironment{varthm*}[1]{\trivlist\item[]{\bf #1.}\it}{\endtrivlist}

%*****************************************************************************
% Macros

\def\keywordname{{\bfseries Keywords}}%
\def\keywords#1{\par\addvspace\medskipamount{\rightskip=0pt plus1cm
\def\and{\ifhmode\unskip\nobreak\fi\ $\cdot$
}\noindent\keywordname\enspace\ignorespaces#1\par}}
\def\subclassname{{\bfseries Mathematics Subject Classification
(2020)}\enspace}
\def\subclass#1{\par\addvspace\medskipamount{\rightskip=0pt plus1cm
\def\and{\ifhmode\unskip\nobreak\fi\ $\cdot$
}\noindent\subclassname\ignorespaces#1\par}}

\begin{document}
\title{Minimal plus-one generated line arrangements with double and triple intersection points}
\author{Artur Bromboszcz}
\date{\today}
\maketitle
\thispagestyle{empty}
\begin{abstract}
We provide a complete classification, in the language of weak-combinatorics, of minimal plus-one generated line arrangements in the complex projective plane with double and triple intersection points.
\keywords{14N20, 52C35, 32S22}
\subclass{minimal plus-one generated curves, line arrangements, intersection points, weak-combinatorics}
\end{abstract}
\section{Introduction}
In a very recent paper \cite{A0}, Abe introduced a new class of hyperplane arrangements, namely the class of plus-one generated  arrangements. It turns our that these arrangement are strongly connected with free hyperplane arrangements and we are going to explain it shortly now. An arrangement of hyperplanes is called {\it free} if its associated module of derivations is a free module over the polynomial ring. For plus-one generated arrangements the associated module of derivations is no longer free, but it admits a simple minimal free resolution, so it is a natural step away from freeness property. Moreover, if we restrict our attention to line arrangements in the projective plane, the plus-one generated property appears in close relation to the freeness, namely if we delete a line from a free line arrangement, then the resulting arrangement is either free or plus-one generated, see also \cite{A0} for details. 

Here in the paper we focus on the classification problem of line arrangements in the complex projective plane that admit only double and triple intersection points and we provide a complete classification of plus-one generated line arrangements having only double and triple points that depends on the weak-combinatorics of such arrangements. Let $\mathcal{L} \subset \mathbb{P}^{2}_{\mathbb{C}}$ be an arrangement of $d$. Denote by $n_{i} = n_{i}(\mathcal{L})$ the number of $i$-fold intersection points among the lines in $\mathcal{L}$. Then the weak-combinatorics of a given line arrangement $\mathcal{L}$ is defined as
$$W(\mathcal{L}) = (d; n_{2}, \dots ,n_{d}),$$
and we will use a convention that we truncate the vector $W(\mathcal{L})$ by removing data $n_{i} = 0$ for $i > m(\mathcal{L})$, where $m(\mathcal{L})$ denotes the maximal multiplicity of intersection points in $\mathcal{L}$. 

Also very recently, Dimca and Sticlaru defined the class of minimal plus-one generated reduced plane curves explain their fundamental homological properties. Using their ideas and definitions, we can formulate the following main result of that note.
\begin{theoremA}
Let $\mathcal{L}\subset \mathbb{P}^{2}_{\mathbb{C}}$ be an arrangement of $d$ lines with only nodes and triple intersection points that is minimal plus-one generated. Then $\mathcal{L}$ has one of the following weak-combinatorics: 
$$(d;n_{2},n_{3}) \in \{(6;9,2),(7;9,4),(8;7,7),(9;6,10)\}.$$
\end{theoremA}
For all necessary definitions, please see Section $2$. This result is somewhat surprising, since the condition of being minimal plus-one generated turns out to be more restrictive compared to near-freeness \cite{Kabat}, or freeness itself \cite{FV}.
Our result fits well in the current investigations on plus-one generated arrangements of curves, see for instance \cite{MP}, where the authors study plus-one generated conic-line arrangements with nodes, tacnodes, and ordinary triple points, or the so-called maximizing line arrangements in the complex projective plane \cite{Liana}.

Let us present the structure of our paper. In Section $2$, we provide a short introduction into the world of plus-one generated curves. Then, in Section $3$, we provide some combinatorial constraints on plus-one generated line arrangements having double and triple intersections. In Section $4$, we provide our main result which is a classification of minimal plus-one generated line arrangements with only double and triple points. In the last section we study geometric and homological properties of specific arrangements with $9$ lines having $10$ triple and $6$ double points and, in particular, we construct a new example of the so-called weak Ziegler pairs.

In the paper we work over the complex numbers and all symbolic computations are preformed using \verb}SINGULAR} \cite{Singular}.

\section{Introduction to plus-one generated curves}
 Let $C$ be a reduced curve $\mathbb{P}^{2}_{\mathbb{C}}$ of degree $d$ given by $f \in S :=\mathbb{C}[x,y,z]$. We denote by $J_{f}$ the Jacobian ideal generated by the partials derivatives $\partial_{x}f, \, \partial_{y}f, \, \partial_{z}f$. Moreover, we denote by $r:={\rm mdr}(f)$ the minimal degree of a relation among the partial derivatives, i.e., the minimal non-trivial degree $r$ of a triple $(a,b,c) \in S_{r}^{3}$ such that 
$$a\cdot \partial_{x} f + b\cdot \partial_{y}f + c\cdot \partial_{z}f = 0.$$
We denote by $\mathfrak{m} = \langle x,y,z \rangle$ the irrelevant ideal. Consider the graded $S$-module $N(f) = I_{f} / J_{f}$, where $I_{f}$ is the saturation of $J_{f}$ with respect to $\mathfrak{m}$.
\begin{definition}
We say that a reduced plane curve $C$ is an $m$-syzygy curve when the associated Milnor algebra $M(f)$ has the following minimal graded free resolution:
$$0 \rightarrow \bigoplus_{i=1}^{m-2}S(-e_{i}) \rightarrow \bigoplus_{i=1}^{m}S(1-d - d_{i}) \rightarrow S^{3}(1-d)\rightarrow S \rightarrow M(f) \rightarrow 0$$
with $e_{1} \leq e_{2} \leq ... \leq e_{m-2}$ and $1\leq d_{1} \leq ... \leq d_{m}$. The $m$-tuple $(d_{1}, ..., d_{m})$ is called the exponents of $C$.
\end{definition}

\begin{definition}
A reduced curve $C$ in $\mathbb{P}^{2}_{\mathbb{C}}$ is called \textbf{plus-one generated} with the exponents $(d_1,d_2)$ if $C$ is $3$-syzygy such that $d_{1}+d_{2}=d$. 
\end{definition}
In order to study plus-one generated line arrangements we will use the following characterization that comes from \cite{DS0}. Here, by $\tau(C)$, we denote the total Tjurina number of a given reduced curve $C \subset \mathbb{P}^{2}_{\mathbb{C}}$ which is defined as the degree of the Jacobian ideal $J_{f}$.
\begin{proposition}[Dimca-Sticlaru]
\label{dimspl}
Let $C: f=0$ be a reduced $3$-syzygy curve of degree $d\geq 3$ with the exponents $(d_{1},d_{2},d_{3})$. Then $C$ is plus-one generated if and only if
$$\tau(C) = (d-1)^{2} - d_{1}(d-d_{1}-1) - (d_{3}-d_{2}+1).$$
\end{proposition}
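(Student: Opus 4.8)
The plan is to read the global Tjurina number straight off the minimal free resolution and then recognize the claimed formula as the boundary case of a general inequality for three-syzygy curves. First I would specialize the defining resolution of an $m$-syzygy curve to $m=3$, so that the last two terms are $F_3 = S(-e_1)$ and $F_2 = \bigoplus_{i=1}^{3} S(1-d-d_i)$, and take the alternating sum of twisted Hilbert series. This gives
$$H_{M(f)}(t) = \frac{N(t)}{(1-t)^3}, \qquad N(t) = 1 - 3t^{d-1} + \sum_{i=1}^{3} t^{\,d-1+d_i} - t^{\,e_1}.$$
Since $C$ is reduced its singular locus is zero-dimensional, so $\dim M(f)=1$ and therefore $(1-t)^2$ divides $N(t)$. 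The vanishing $N(1)=0$ is automatic, while $N'(1)=0$ is exactly the numerical constraint $e_1 = d_1+d_2+d_3$, which I would record for later use.

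With that constraint the constant value of the Hilbert polynomial, which equals $\tau(C)$, is $\tfrac12 N''(1)$, and a direct residue computation yields the general three-syzygy formula $\tau(C) = (d-1)(d_1+d_2+d_3) - (d_1d_2+d_1d_3+d_2d_3)$. I would then subtract the asserted right-hand side; the difference factors cleanly as
$$\tau(C) - \Big[(d-1)^2 - d_1(d-d_1-1) - (d_3-d_2+1)\Big] = (d-d_1-d_2)\,(d_3+d_1+2-d).$$
Hence the asserted value of $\tau(C)$ is attained precisely when $d_1+d_2=d$ or when $d_3 = d-d_1-2$.

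The hard part will be excluding the spurious second factor, and this is where minimality of the resolution enters. The key structural fact is that every genuine three-syzygy curve satisfies $d_1+d_2\ge d$, with equality exactly in the plus-one generated case. To prove the inequality I would argue that the unique second syzygy must involve the top-degree generator, of degree $d_3$, of the module $AR(f)$ of Jacobian relations: otherwise that generator splits off as a free summand, and since $AR(f)$ is a rank-two reflexive module this forces $AR(f)$ to be free, i.e.\ $C$ free (a $2$-syzygy curve), contradicting the hypothesis. Minimality then forces $e_1 > d-1+d_3$, and combined with $e_1 = d_1+d_2+d_3$ this is exactly $d_1+d_2\ge d$ (the equality case $e_1 = d+d_3$ being the relation sitting one degree above the top generator, which explains the name). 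Consequently $d_3 = d-d_1-2$ would give $d_1+d_2 \le d_1+d_3 = d-2 < d$, a contradiction, so only the factor $d-d_1-d_2$ can vanish. Putting the two directions together, $\tau(C)$ equals the claimed value if and only if $d_1+d_2=d$, which is precisely the condition that the $3$-syzygy curve $C$ be plus-one generated.

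I expect the Hilbert-series bookkeeping and the factorization to be mechanical, so I would keep them terse; the genuinely structural point is the inequality $d_1+d_2\ge d$, and I would present the reflexivity-plus-minimality argument for it in full detail.
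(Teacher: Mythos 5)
Your argument is correct and complete: the Hilbert-series computation giving $\tau(C)=(d-1)(d_1+d_2+d_3)-(d_1d_2+d_1d_3+d_2d_3)$, the factorization of the discrepancy as $(d-d_1-d_2)(d_1+d_3+2-d)$, and the reflexivity-plus-minimality argument forcing $d_1+d_2\geq d$ (which kills the second factor, since $d_2\leq d_3$) all check out. Note that the paper itself offers no proof of this proposition --- it is imported verbatim from Dimca--Sticlaru \cite{DS0} --- and your reconstruction follows essentially the same route as that source, where the identity $\tau(C)=(d-1)^2-d_1(d-d_1-1)-(d_3-d_2+1)-(d_1+d_2-d)(d_1+d_3-d+2)$ together with $d_1+d_2\geq d$ is exactly the mechanism behind the equivalence. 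The only cosmetic caveat is your parenthetical claim that $\dim M(f)=1$, which fails for smooth curves; but there $(1-t)^3$ divides $N(t)$ and $\tau=0$, so the identity $\tau(C)=\tfrac12 N''(1)$ and the rest of the argument go through unchanged.
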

\noindent
In order to define the main object of our interest, we will follow \cite{DS24}.
Define the $\delta$-level of a plus-one generated curve by
$$\delta L(C) = d_{3} - d_{2} \geq 0.$$

\begin{definition}
A plus-one generated curve $C$ satisfying $\delta L(C)=1$ is called a minimal plus-one generated curve.
\end{definition}
\begin{remark}
In the case where $\delta L(C) = 0$, we call $C$ a nearly free curve.
\end{remark}
For such minimal plus-one generated curves we have the following characterization that we are going to exploit in the paper.
\begin{theorem}[{\cite[Theorem 1.5]{DS24}}]
\label{MPOG}
Let $C \, : f=0$ be a reduced plane curve of degree $d$ in $\mathbb{P}^{2}_{\mathbb{C}}$ with $r = {\rm mdr}(f) \leq d/2$. Then $C$ is a minimal plus-one generated curve if and only if
$$r^{2} - r(d-1) + (d-1)^2 = \tau(C)+2.$$

\end{theorem}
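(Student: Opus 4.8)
The plan is to prove the two implications separately via Proposition~\ref{dimspl}, after recording the elementary fact that for a $3$-syzygy curve the smallest exponent is exactly the minimal degree of a relation, so $d_1=r$. For the implication $\Rightarrow$, assume $C$ is minimal plus-one generated; then $C$ is a $3$-syzygy curve with exponents $(d_1,d_2,d_3)$ satisfying $d_1+d_2=d$ and $\delta L(C)=d_3-d_2=1$, and since $d_1\le d_2$ the relation $d_1+d_2=d$ gives $r=d_1\le d/2$ automatically. Substituting $d_1=r$, $d-d_1-1=d-r-1$ and $d_3-d_2+1=2$ into Proposition~\ref{dimspl} yields
\[
\tau(C)=(d-1)^2-r(d-r-1)-2 ,
\]
and expanding $r(d-r-1)=r(d-1)-r^2$ rearranges this at once into $r^2-r(d-1)+(d-1)^2=\tau(C)+2$.

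For the implication $\Leftarrow$ the task is to reconstruct the whole exponent structure from one numerical identity. First I would observe that the right-hand constant is the du Plessis--Wall maximal Tjurina number,
\[
\tau_{\max}(d,r):=(d-1)(d-r-1)+r^2=r^2-r(d-1)+(d-1)^2 ,
\]
so the hypothesis reads $\tau(C)=\tau_{\max}(d,r)-2$. Because $r\le d/2$, the bound $\tau(C)\le\tau_{\max}(d,r)$ applies, and I would invoke its structural refinement (du Plessis--Wall, Dimca) in this range to conclude that a curve whose Tjurina number lies just below the maximum is free or a $3$-syzygy curve with $d_1=r$. A free curve has exponents $(r,d-1-r)$ and hence $\tau=\tau_{\max}(d,r)$, which our value $\tau_{\max}(d,r)-2$ excludes; so $C$ is a $3$-syzygy curve with $d_1=r$.

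For such a curve I would combine two combinatorial constraints with the explicit Tjurina formula. A Hilbert-series computation on the minimal resolution of the Milnor algebra forces the unique relation among the three generators to have degree $e_1=d_1+d_2+d_3$ and expresses the Tjurina number of any $3$-syzygy curve as
\[
\tau(C)=(d-1)(d_1+d_2+d_3)-(d_1d_2+d_1d_3+d_2d_3) .
\]
Minimality makes the coefficients of that relation lie in $\mathfrak{m}$, which on comparing degrees yields $d_1+d_2\ge d$, while the ordering gives $d_2\le d_3$. Writing $d_2=(d-r)+i$ and $d_3=d_2+j$ with $i\ge0$ and $j\ge0$, substituting into the formula and subtracting the target $\tau_{\max}(d,r)-2$ collapses everything to
\[
j+i(2+i+j)=1 .
\]
For $i\ge1$ the left side is at least $3$, so the only nonnegative solution is $i=0,\ j=1$, giving $d_1+d_2=d$ and $d_3-d_2=1$: thus $C$ is plus-one generated with $\delta L(C)=1$, hence minimal plus-one generated.

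The forward direction and the final Diophantine step are routine once the exponent dictionary is fixed; the real difficulty is the structural input in the reverse direction, namely that $\tau=\tau_{\max}(d,r)-2$ together with $r\le d/2$ forces $C$ to be a $3$-syzygy curve with $d_1=r$, ruling out curves with four or more syzygies. This is exactly where the du Plessis--Wall inequality and its Dimca-type refinement are needed, and I expect it to require the most care, in particular at the boundary $r=d/2$.
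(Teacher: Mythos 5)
The paper offers no proof of this statement at all---it is imported verbatim as \cite[Theorem 1.5]{DS24}---so there is no internal argument to compare yours against; I can only assess your reconstruction on its own terms. Your forward implication is complete and correct. The supporting computations in your reverse direction also check out: for a $3$-syzygy curve the Hilbert series of the resolution does force $e_1=d_1+d_2+d_3$ and $\tau(C)=(d-1)(d_1+d_2+d_3)-(d_1d_2+d_1d_3+d_2d_3)$; minimality of the resolution (entries of the last map of degrees $e_1-(d-1)-d_i\geq 1$) does give $d_1+d_2\geq d$; and the reduction to $j+i(2+i+j)=1$ with unique nonnegative solution $(i,j)=(0,1)$ is right. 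You are also correct to avoid using Proposition~\ref{dimspl} in the reverse direction, since that statement presupposes the plus-one generated property rather than detecting it.

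The genuine gap is exactly the step you flag yourself: the claim that $\tau(C)=\tau_{\max}(d,r)-2$ with $r\leq d/2$ forces $C$ to be a $3$-syzygy curve with $d_1=r$. The du Plessis--Wall inequality \cite{duP} and its refinements give $\tau(C)\leq\tau_{\max}(d,r)$ with equality characterizing freeness (for $r<d/2$) and $\tau=\tau_{\max}-1$ characterizing near-freeness; they do not, as stated, rule out an $m$-syzygy curve with $m\geq 4$ at defect $2$, and without that exclusion your Tjurina formula for $3$-syzygy curves cannot be applied. Since this exclusion is essentially the whole content of the theorem, leaving it to an unspecified ``structural refinement'' places the real work inside the citation. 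The standard way to close the gap is via the rank-two vector bundle $T\langle C\rangle$ of logarithmic vector fields: a minimal-degree Jacobian relation is a section of a suitable twist vanishing on a zero-dimensional scheme $Z$ of length $\tau_{\max}(d,r)-\tau(C)=2$, yielding an exact sequence $0\to\mathcal{O}(-r)\to E\to\mathcal{I}_Z(r-d+1)\to 0$; as any length-two subscheme of $\mathbb{P}^2$ is a complete intersection of a line and a conic, the module of Jacobian relations has at most $1+2=3$ generators, of degrees $r$, $d-r$, $d-r+1$, and exactly three because $C$ is not free. Note that this argument hands you the exponents $(r,d-r,d-r+1)$ directly, so once the gap is filled properly your Diophantine endgame becomes redundant.
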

\section{Combinatorial constraints on minimal plus-one generated line arrangements with double and triple intersection points}
In the first step we want to find an upper bound for the number of lines in our plus-one generated arrangements. We start with the following proposition that comes from \cite[Proposition 4.7]{DimcaPokora}.
\begin{proposition}
Let $C \, : \, f = 0$ be an arrangement of $d$ lines in $\mathbb{P}^{2}_{\mathbb{C}}$ such that it has only double and triple intersection points. Then one has
$${\rm mdr}(f) \geq \frac{2}{3}d - 2.$$
\end{proposition}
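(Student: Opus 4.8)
\section*{Proof proposal}

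The plan is to convert the geometric hypotheses into the numerical data of $f$ and then to feed that data into estimates relating $r={\rm mdr}(f)$ to the singularities. First I would record the local Tjurina numbers: a node (an $A_1$ point) contributes $1$ and an ordinary triple point (a $D_4$ point) contributes $(3-1)^2=4$, so that $\tau(C)=n_2+4n_3$. Counting the $\binom{d}{2}$ pairs of lines by the point in which they meet gives the identity $\binom{d}{2}=n_2+3n_3$, and subtracting produces the clean expression $\tau(C)=\binom{d}{2}+n_3$. Because every triple point absorbs three of the $\binom{d}{2}$ pairs, one has $n_3\le \tfrac13\binom{d}{2}$, hence the combinatorial upper bound $\tau(C)\le\tfrac43\binom{d}{2}=\tfrac23 d(d-1)$; if a sharper estimate is needed I would invoke Hirzebruch's inequality $n_2+\tfrac34 n_3\ge d$, valid for arrangements with only double and triple points, to bound $n_3$ more tightly.

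The key mechanism I would exploit is the restriction of a minimal syzygy to the lines of $\mathcal{L}$. A nonzero relation of degree $r$ is a logarithmic derivation $\theta$ of that degree, tangent to every $\ell_i$. At any point $p$ lying on two distinct lines one has $\theta(p)=0$, so the tangential field induced by $\theta$ on a line $\ell\cong\mathbb{P}^1$ is a section of $T_\ell(r-1)\cong\mathcal{O}_{\mathbb{P}^1}(r+1)$ that must vanish at each of the intersection points carried by $\ell$. Writing $t_2(\ell),t_3(\ell)$ for the double and triple points on $\ell$, with $t_2(\ell)+2t_3(\ell)=d-1$, a nonzero such section forces $r+1\ge t_2(\ell)+t_3(\ell)=d-1-t_3(\ell)$, that is $r\ge d-2-t_3(\ell)$. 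The exceptional lines, on which the tangential field vanishes identically, must divide all three coefficients of $\theta$, so there are at most $r$ of them; hence the inequality $r\ge d-2-t_3(\ell)$ holds on at least $d-r$ lines. Summing over these lines against $\sum_\ell t_3(\ell)=3n_3$ and inserting the bound on $n_3$ gives a quadratic constraint on $r$ that I would solve to extract a linear lower bound $r\ge cd-O(1)$, which I expect to refine to the asserted constant $\tfrac23$ by combining it with the du Plessis--Wall estimates for the syzygy bundle $E_f$, a rank-two bundle with $c_1(E_f)=1-d$ and $c_2(E_f)=(d-1)^2-\tau(C)$.

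The hard part is reaching the exact constant $\tfrac23$ rather than merely a lower bound of the right linear order. The naive restriction count above only yields something of the form $r\ge d-2-t_3^{\max}$, and since $t_3(\ell)$ can be as large as $\tfrac{d-1}{2}$ this degrades to roughly $\tfrac{d-3}{2}$, weaker than $\tfrac23 d-2$ for large $d$; the deficit comes from not seeing the extra vanishing of $\theta$ at the triple points. To close the gap I would pass to the homological picture and track the zero-scheme of the minimal section of $E_f(r)$, whose length equals $c_2(E_f(r))=(d-1)^2-\tau(C)-r(d-1-r)$, using the \emph{minimality} of $r$ (and not only the existence of a section) to bound this length; this is exactly the point where an upper bound on $\tau(C)$ alone is insufficient and where the analysis must be split according to whether $r<d/2$ or $r\ge d/2$, with the boundary case $2r=d-1$ and the smallest values of $d$ checked directly. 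Throughout I would use the dual Hesse arrangement, with $(d,n_2,n_3)=(9,0,12)$, which is free with $r=4$ and attains equality in $r\ge\tfrac23 d-2$, as a sanity check that the constants are optimal.
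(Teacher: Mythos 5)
There is a genuine gap, and you have in fact located it yourself: everything up to and including the restriction argument is sound (the local data $\tau(C)=n_2+4n_3=\binom{d}{2}+n_3$, and the fact that a degree-$r$ syzygy restricts on a line $\ell$ to a section of $\mathcal{O}_{\mathbb{P}^1}(r+1)$ vanishing at the $d-1-t_3(\ell)$ singular points on $\ell$, whence $r\ge d-2-t_3(\ell)$ on all but at most $r$ lines), but this only yields $r\gtrsim (d-3)/2$, and the tools you propose for closing the gap do not reach the constant $\tfrac23$. Concretely, the du Plessis--Wall chain $(d-1)(d-r-1)\le\tau(C)\le (d-1)^2-r(d-r-1)$ combined with your bound $\tau(C)=\binom{d}{2}+n_3\le\tfrac23 d(d-1)$ gives only $r\ge \tfrac{d}{3}-1$; no amount of bookkeeping with $c_2(E_f(r))$ and the length of the zero-scheme of the minimal section improves this to $\tfrac23 d-2$, because these invariants see only $\tau(C)$ and $d$, and there exist curves with the same $(d,\tau)$ and strictly smaller ${\rm mdr}$ than any line arrangement with only nodes and triple points. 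So the "hard part" you flag is not a refinement of your argument but a genuinely different input.

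The statement is quoted in the paper from Dimca--Pokora \cite[Proposition 4.7]{DimcaPokora} without proof, and the proof there rests on a Hodge-theoretic result of Dimca--Saito: for a reduced plane curve whose singularities are all weighted homogeneous, ${\rm mdr}(f)\ge \alpha_C\, d-2$, where $\alpha_C$ is the minimal Arnold exponent (log canonical threshold) over the singular points. A node has Arnold exponent $1$ and an ordinary triple point has Arnold exponent $\tfrac13+\tfrac13=\tfrac23$, so $\alpha_C=\tfrac23$ and the bound ${\rm mdr}(f)\ge\tfrac23 d-2$ drops out immediately (more generally one gets ${\rm mdr}(f)\ge \tfrac{2d}{m}-2$ for a line arrangement with maximal multiplicity $m$). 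This spectral/Arnold-exponent input is the missing idea; it is not recoverable from syzygy-bundle Chern class counts or from restricting logarithmic derivations to the lines. Your sanity check with the dual Hesse arrangement ($d=9$, $r=4$, equality in the bound) is correct and does confirm the constant is optimal, but it does not help prove the inequality.
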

If $C \, : f=0$ is a plus-one generated arrangement of $d$ lines with double and triple intersection points with the exponents $(d_{1}, d_{2})$, $d_{1}\leq d_{2}$, then ${\rm mdr}(f) = d_{1}$. Since 
$$2d_{1} \leq d_{1}+d_{2} =d$$
we obtain that ${\rm mdr}(f) \leq d/2$. Combining it with the above proposition, we arrive at
$$\frac{2}{3}d - 2 \leq {\rm mdr}(f) \leq d/2.$$
This gives us the following result.
\begin{proposition}
If $\mathcal{L} \subset \mathbb{P}^{2}_{\mathbb{C}}$ is a plus-one generated arrangement of $d$ lines with double and triple intersection points, then $d\leq 12$. 
\end{proposition}
Having such a powerful bound on the number our plus-one generated arrangements, we can formulate the following natural problem.
\begin{problem}
Classify all weak-combinatorics of all plus-one generated line arrangements with double and triple intersection points in $\mathbb{P}^{2}_{\mathbb{C}}$
\end{problem}
Here by the weak-combinatorics, for an arrangement of lines $\mathcal{L}$, we mean the vector $(d;n_{2},n_{3})$, where $d$ is the number of lines, $n_{2}$ is the number of nodes, and $n_{3}$ is the number of triple points. Sometimes we will write $n_{i}(\mathcal{L})$ with $i \in \{2,3\}$ in order to emphasize the underlying arrangement of lines $\mathcal{L}$.
The first step towards the classification is the following proposition.
\begin{proposition}
\label{triple}
Let $\mathcal{L} \subset \mathbb{P}^{2}_{\mathbb{C}}$ be a minimal plus-one generated arrangement of $d$ lines with nodes and triple intersection points, then
\begin{equation}
\label{t3}
n_{3} \geq \frac{1}{4}\bigg(d^{2}-4d-5\bigg).
\end{equation}
\end{proposition}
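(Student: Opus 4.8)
The plan is to package the two hypotheses---the restriction on singularities and the minimal plus-one generated condition---into a single identity expressing $n_3$ in terms of $r := \mathrm{mdr}(f)$, and then to bound that expression by minimizing a quadratic. First I would record the elementary combinatorics of an arrangement of $d$ lines whose only singular points are nodes and ordinary triple points. Counting incident pairs of lines, a node contributes one pair and a triple point contributes $\binom{3}{2} = 3$ pairs, so
$$\binom{d}{2} = n_2 + 3n_3.$$
Since an $A_1$ node has Tjurina number $1$ and an ordinary triple point (locally a $D_4$ singularity) has Tjurina number $4$, the total Tjurina number is
$$\tau(\mathcal{L}) = n_2 + 4n_3 = \binom{d}{2} + n_3,$$
where the last equality uses the pair-count above.

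Next I would invoke the characterization of minimal plus-one generated curves. As noted just before the bound $d \le 12$, a plus-one generated line arrangement with double and triple points satisfies $r = d_1 \le d/2$, so Theorem \ref{MPOG} applies and gives
$$r^2 - r(d-1) + (d-1)^2 = \tau(\mathcal{L}) + 2.$$
Substituting $\tau(\mathcal{L}) = \binom{d}{2} + n_3$ and using the simplification $(d-1)^2 - \binom{d}{2} = \tfrac{(d-1)(d-2)}{2}$, I would solve for $n_3$ to obtain the clean expression
$$n_3 = r^2 - r(d-1) + \frac{(d-1)(d-2)}{2} - 2.$$

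Finally, the inequality enters through a one-line minimization: the right-hand side is a convex quadratic in $r$ with positive leading coefficient, so it attains its global minimum at the vertex $r = \tfrac{d-1}{2}$, where $r^2 - r(d-1) = -\tfrac{(d-1)^2}{4}$. Substituting this value yields
$$n_3 \ge -\frac{(d-1)^2}{4} + \frac{(d-1)(d-2)}{2} - 2 = \frac{d^2 - 4d - 5}{4},$$
which is precisely \eqref{t3}. I do not expect a genuine obstacle here: the computation is bookkeeping of combinatorial and Tjurina invariants followed by minimizing a parabola, and the stated bound is exactly the vertex value, so no integrality refinement of $r$ is required (the continuous minimum already reproduces the claim, and a fortiori it holds for the integer value of $r$ attached to $\mathcal{L}$). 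The only point deserving care is the justification that $r \le d/2$, so that Theorem \ref{MPOG} is legitimately in force; this is exactly the inequality extracted from $2d_1 \le d_1 + d_2 = d$ in the paragraph preceding Proposition~3.2.
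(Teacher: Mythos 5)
Your proof is correct and takes essentially the same route as the paper: the same combinatorial count $\binom{d}{2}=n_2+3n_3$, the same Tjurina identity $\tau(\mathcal{L})=\binom{d}{2}+n_3$, and the same substitution into the characterization $r^2-r(d-1)+(d-1)^2=\tau(\mathcal{L})+2$. The only cosmetic difference is that the paper extracts the bound by requiring the discriminant of the resulting quadratic in $r$ to be nonnegative, while you minimize that quadratic at its vertex $r=\tfrac{d-1}{2}$ --- these are the same condition, and your closing remark that the continuous minimum suffices (no integrality of $r$ needed) matches the paper's use of the bound.
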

\begin{proof}
Recall that if $\mathcal{L}$ is minimal plus-one generated with $r = {\rm mdr}(f)$, where $f \in \mathbb{C}[x,y,z]$ is the defining equation, then by Proposition \ref{dimspl} one has
$$r^{2} - r(d-1) + (d-1)^2 = \tau(\mathcal{L})+2.$$
We have the following combinatorial count
$$\binom{d}{2} = \frac{d(d-1)}{2} = n_{2} + 3n_{3}.$$
Since 
$\tau(\mathcal{L}) = n_{2} + 4n_{3} = \binom{d}{2}+n_{3}$,
we obtain
$$r^{2} - r(d-1) + (d-1)^2 = r^{2}-r(d-1)+d^{2}-2d+1 = \binom{d}{2}+n_{3}+2.$$
After simple manipulations, we arrive at
\begin{equation}
\label{disc}
2r^{2} - 2r(d-1) + d^{2}-3d-2-2n_{3}=0.   
\end{equation}
The above equation can have integer roots if the discriminant $\triangle_{r}= -4d^2 +16d + 20 +16n_{3}$ of \eqref{disc} satisfies $\triangle_{r} \geq 0$, so we get
$$n_{3} \geq \frac{1}{4}\bigg(d^{2}-4d-5\bigg).$$
\end{proof}
This result provides us a lower bound on the number of triple points for our plus-one generated arrangements. Moreover, we have a general upper-bound on the number of triple points for line arrangements (that does not depend on the plus-one generation) due to Sch\"onheim \cite{Sch}. Let us define the following number
$$U_{3}(d) := \bigg\lfloor \bigg\lfloor \frac{d-1}{2} \bigg\rfloor \cdot \frac{d}{3} \bigg\rfloor - \varepsilon(d),$$
where $\varepsilon(d)=1$ if $d = 5 \, {\rm mod}(6)$ and $\varepsilon(d) =0$ otherwise. Then
\begin{equation}
    n_{3} \leq U_{3}(d).
\end{equation}
If $\mathcal{L}$ is minimal plus-one generated with only double and triple intersections, then
\begin{equation}
\label{in1}
\frac{1}{4}\bigg(d^{2}-4d-5\bigg) \leq n_{3} \leq U_{3}(d).
\end{equation}
Using the above chain of inequalities, we can automatically conclude the following.
\begin{corollary}
Let $\mathcal{L} \subset \mathbb{P}^{2}_{\mathbb{C}}$ be a nearly free arrangement of $d$ lines with only nodes and triple intersection points. Then
$$d \leq 9.$$
\end{corollary}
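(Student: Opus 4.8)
The plan is to mirror the proof of Proposition \ref{triple}, replacing the minimal plus-one generated characterization by its nearly free counterpart, and then to play the resulting lower bound on $n_{3}$ against Sch\"onheim's upper bound $U_{3}(d)$ together with the bound $d\leq 12$ already established for plus-one generated arrangements.

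First I would record that, in the framework of this note, a nearly free arrangement is exactly a plus-one generated arrangement with $\delta L(\mathcal{L}) = d_{3}-d_{2}=0$. Hence Proposition \ref{dimspl}, applied with $r = d_{1} = {\rm mdr}(f)$ and $d_{3}-d_{2}=0$, specializes to
$$\tau(\mathcal{L}) = (d-1)^{2} - r(d-r-1) - 1,$$
that is, $r^{2} - r(d-1) + (d-1)^{2} = \tau(\mathcal{L})+1$. Feeding in the same combinatorial identities as before, $\binom{d}{2} = n_{2}+3n_{3}$ and $\tau(\mathcal{L}) = n_{2}+4n_{3} = \binom{d}{2}+n_{3}$, and clearing denominators, I would reduce this to the quadratic in $r$
$$2r^{2} - 2r(d-1) + d^{2}-3d-2n_{3} = 0.$$

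Next, exactly as in the proof of Proposition \ref{triple}, I would require the discriminant $\triangle_{r} = -4d^{2}+16d+4+16n_{3}$ of this quadratic to be nonnegative, which is necessary for an integer value of $r$ to solve it. This yields the lower bound
$$n_{3} \geq \frac{1}{4}\bigg(d^{2}-4d-1\bigg),$$
which, combined with $n_{3}\leq U_{3}(d)$, gives the chain $\tfrac14(d^{2}-4d-1)\leq n_{3}\leq U_{3}(d)$. Since every plus-one generated arrangement with only double and triple points already satisfies $d\leq 12$, only the cases $d\in\{10,11,12\}$ remain to be excluded: for these the chain demands respectively $15\leq n_{3}\leq 13$, $19\leq n_{3}\leq 17$, and $24\leq n_{3}\leq 20$, each of which is contradictory, whereas $d=9$ gives the consistent range $11\leq n_{3}\leq 12$. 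This forces $d\leq 9$.

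The one step I would check most carefully is the specialization of Proposition \ref{dimspl} to the nearly free case: the additive constant there is $+1$, not the $+2$ that appears in Theorem \ref{MPOG} for minimal plus-one generated curves, and it is precisely this unit shift that changes the lower bound from $\tfrac14(d^{2}-4d-5)$ to $\tfrac14(d^{2}-4d-1)$. Everything downstream of the quadratic is then a routine discriminant computation followed by a three-value numerical comparison against $U_{3}(d)$, so I expect no genuine obstacle beyond getting that constant right.
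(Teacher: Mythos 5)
Your proof is correct, and it differs from the paper's own argument in one substantive and, in fact, clarifying way. The paper's proof simply invokes the chain of inequalities (\ref{in1}), whose lower bound $n_{3}\geq\frac{1}{4}\left(d^{2}-4d-5\right)$ was established in Proposition \ref{triple} under the hypothesis that $\mathcal{L}$ is \emph{minimal plus-one generated}, and checks that this contradicts $n_{3}\leq U_{3}(d)$ for $d\in\{10,11,12\}$. Taken literally, however, the corollary speaks of \emph{nearly free} arrangements, so the paper is either carrying a typo in the statement or applying an inequality proved under a different hypothesis. You instead rerun the discriminant argument after specializing Proposition \ref{dimspl} to the nearly free case ($d_{3}-d_{2}=0$, hence the constant $+1$ in place of the $+2$ of Theorem \ref{MPOG}), and your algebra is right: the quadratic $2r^{2}-2r(d-1)+d^{2}-3d-2n_{3}=0$ has discriminant $-4d^{2}+16d+4+16n_{3}$, giving $n_{3}\geq\frac{1}{4}\left(d^{2}-4d-1\right)$. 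Your numerical checks are also correct: $U_{3}(10)=13$, $U_{3}(11)=17$, $U_{3}(12)=20$, against required lower bounds $15$, $19$, $24$ respectively, each contradictory, while $d=9$ gives the consistent range $11\leq n_{3}\leq 12$; and the restriction to $d\in\{10,11,12\}$ is legitimate because a nearly free arrangement satisfies $d_{1}+d_{2}=d$ with $d_{1}\leq d_{2}$, so ${\rm mdr}(f)\leq d/2$ and the bound $d\leq 12$ from Section $3$ applies. The net effect is that your argument proves the statement exactly as written, whereas the paper's proof, as printed, establishes the analogous bound for minimal plus-one generated arrangements; since both lower bounds exceed $U_{3}(d)$ for $d\in\{10,11,12\}$, either version closes the case analysis, but yours is the one that matches the stated hypothesis.
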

\begin{proof}
    Observe that for $d \in \{10, 11, 12\}$ the chain of inequalities in (\ref{in1}) leads us to a contradiction. Indeed, if we take $d=12$, then
    $$22\frac{3}{4} \leq n_{3} \leq 20,$$
    and we get a contradiction. In a similar way we treat the cases $d=11$ and $d=10$.
\end{proof}

\section{Classification of minimal plus-one generated arrangements}
Using the above results providing constraints on our plus-one generated line arrangements, we can present our complete classification that is phrased in the language of their weak-combinatorics.
\begin{theorem}
Let $\mathcal{L}\subset \mathbb{P}^{2}_{\mathbb{C}}$ be an arrangement of $d$ lines with nodes and triple intersection points that is minimal plus-one generated. Then $\mathcal{L}$ has one of the following weak-combinatorics: 
$$(d;n_{2},n_{3}) \in \{(6;9,2),(7;9,4),(8;7,7),(9;6,10)\}.$$
In fact, all these weak-combinatorics can be geometrically realized over the reals.
\end{theorem}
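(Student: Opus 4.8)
The plan is to convert the two-sided bound \eqref{in1}, together with the exponent constraint $\frac{2}{3}d-2 \le r \le d/2$, into a short and essentially forced case analysis, and then to exhibit explicit real models for the few surviving weak-combinatorics. First I would record that by the preceding Corollary we may assume $d \le 9$, while the Dimca--Pokora bound combined with $2d_1 \le d$ gives $\frac{2}{3}d - 2 \le r \le d/2$ for $r = {\rm mdr}(f) = d_1$. Solving \eqref{disc} for $n_3$ produces
$$n_3 = r^2 - r(d-1) + \frac{d^2 - 3d - 2}{2},$$
an integer-valued function of the pair $(d,r)$ (note $d^2-3d-2$ is always even), and then the complementary count $n_2 = \binom{d}{2} - 3n_3$ fixes the remaining entry of the weak-combinatorics.

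Next I would run over $d \in \{3,\dots,9\}$. The interval $[\frac{2}{3}d-2,\,d/2]$ has length $2 - d/6$, which is $<1$ for $d \ge 7$ and equals $1$ for $d=6$ (where it contains the symmetric pair $\{2,3\}$); hence in every case there are at most two admissible integers $r$, and when two occur they are precisely the two roots of \eqref{disc} and so return the same $n_3$. Substituting gives $n_3 < 0$ for $d \le 4$ (impossible), and $n_3 = 0$ for $d=5$, so the latter carries no triple point and is excluded by hypothesis (indeed the generic five-line arrangement has ${\rm mdr}=3$ and fails the identity of Theorem \ref{MPOG} altogether). The values $d = 6,7,8,9$ return exactly
$$(6;9,2),\quad (7;9,4),\quad (8;7,7),\quad (9;6,10).$$
I would then confirm each survivor against the Sch\"onheim bound $n_3 \le U_3(d)$, where a direct check gives $2\le 4$, $4\le 7$, $7\le 8$ and $10\le 12$; so no case is lost and the list of admissible weak-combinatorics is complete.

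The final, and genuinely harder, step is realizability. The argument above only shows these four vectors are the sole candidates; it does not by itself guarantee existence. I would therefore write down, for each of the four, an explicit product of real linear forms and verify the incidence data by direct computation in \texttt{SINGULAR}: one must check that the arrangement has precisely the prescribed numbers of nodes and triple points and no accidental point of higher multiplicity, and then reconfirm minimal plus-one generation by computing $\tau(\mathcal{L})$ and ${\rm mdr}(f)$ and re-verifying the equation of Theorem \ref{MPOG}. For the densest case $(9;6,10)$ I would reuse the $9$-line configuration studied in the final section, while for the smaller cases I would adapt near-pencil and triangle-based constructions, perturbing a generic arrangement just enough to force exactly the required triple points. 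The main obstacle is thus not the enumeration, which is dictated by the inequalities already established, but producing honest geometric models with the exact prescribed combinatorics and certifying that each is indeed minimal plus-one generated.
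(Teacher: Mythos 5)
Your enumeration of the candidate weak-combinatorics is correct and follows essentially the same route as the paper: the paper also works degree by degree, pins down $r={\rm mdr}(f)$ via $\frac{2}{3}d-2\le r\le d/2$, and solves the pair of equations $n_2+4n_3=\tau(\mathcal{L})$, $n_2+3n_3=\binom{d}{2}$; your closed form $n_3=r^2-r(d-1)+\frac{d^2-3d-2}{2}$ is just that system solved once and for all, and your values for $d=6,7,8,9$ agree with the paper's. Two small remarks on this half. First, your claim that whenever two integers $r$ are admissible they return the same $n_3$ holds for $d=4$ and $d=6$ (there the two integers are symmetric about $(d-1)/2$), but not for $d=3$, where $r\in\{0,1\}$ is not symmetric about $1$; this is harmless since both values give $n_3<0$, but the justification as stated is not quite right. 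Second, for $d=5$ you should not lean on ``excluded by hypothesis'': the hypothesis is that only nodes and triple points occur, not that a triple point exists, so the case $(5;10,0)$ must be killed the way the paper does it --- a nodal arrangement of $5$ lines has ${\rm mdr}(f)\ge d-2=3$, while the constraints force ${\rm mdr}(f)=2$. Your parenthetical gestures at this, but it needs to be the argument rather than an aside.

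The genuine gap is the realizability assertion, which is half of the theorem. Your argument only certifies the four vectors as the sole candidates, as you yourself note, and for $d=6,7,8$ you offer a strategy (``adapt near-pencil and triangle-based constructions, perturbing a generic arrangement'') rather than actual arrangements. The paper discharges this by exhibiting explicit real defining polynomials in each case --- for instance $Q=y(x-z)(y-x-2z)(y+x-2z)(y-x+2z)(y+x+2z)$ for $(6;9,2)$, and analogous products of real linear forms for $d=7,8,9$ --- checking the incidence data directly and computing ${\rm mdr}(Q)$ with \texttt{SINGULAR}, which together with the Tjurina count and Theorem \ref{MPOG} certifies minimal plus-one generation. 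Until you produce such models and verify that no unintended higher-order incidences occur, the claim that all four weak-combinatorics are realized over the reals remains unproved; this is not a routine afterthought, since a priori a combinatorially admissible vector could fail to be realizable (indeed the paper's last section shows that even among realizations of $(9;6,10)$ one of the two is not minimal plus-one generated).
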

\begin{proof} Our proof is divided into degree-dependent parts. Hereby we assume that $\mathcal{L} \, : f=0$ be is minimal plus-one generated with only double and triple intersections.
\begin{enumerate}
    \item[$d=4$:] Our first constraint is
    $$\frac{4}{2} \geq {\rm mdr}(f) \geq \frac{2}{3}\cdot 4-2,$$
    which implies that ${\rm mdr}(f) \in \{1,2\}$. Then the following system of equalities holds:\[
    \begin{cases}
      n_{2} + 4n_{3} =5  \\
      n_{2} + 3n_{3} = \binom{4}{2}=6
    \end{cases},\]
    where the first equation comes from Theorem \ref{MPOG} and a formula for the total Tjurina number, and the second is just the naive combinatorial count. Solving this system we get $t_{3}=-1$, a contradiction.
    \item[$d=5$:]  Observe that
    $$\frac{5}{2} \geq {\rm mdr}(f) \geq \frac{2}{3}\cdot 5-2,$$
    implies ${\rm mdr}(f) = 2$. Then the following system of equalities holds:\[
    \begin{cases}
      n_{2} + 4n_{3} = 10  \\
      n_{2} + 3n_{3} = \binom{5}{2}=10
    \end{cases}.\]
    Then $n_{3}=0$ and $n_{2}=10$. However, if an arrangement of $d$ lines has only nodes as singularities, then ${\rm mdr}(f) \geq d - 2$ (cf. \cite[Proposition 3.2]{MP}), so for $d=5$ we get ${\rm mdr}(f) \geq 3$, a contradiction. Hence for $d=5$ there is no minimal plus-one generated arrangement with double and triple points.
    \item[$d=6$:]  Similarly as in the previous case, we have
    $$\frac{6}{2} \geq {\rm mdr}(f) \geq \frac{2}{3}\cdot 6-2,$$
    which implies that ${\rm mdr}(f) \in \{2,3\}$. Then the following system of equalities holds:\[
    \begin{cases}
      n_{2} + 4n_{3} = 17  \\
      n_{2} + 3n_{3} = \binom{6}{2}=15
    \end{cases},\]
    which gives us that $n_{3}=2$ and $n_{2}=9$.
    By Proposition \ref{triple}, one has
    $$n_{3} \geq \frac{7}{4},$$
    hence $n_{3} \geq 2$ and we want to check whether the weak-combinatorics 
    $$(d;n_{2},n_{3}) = (6,9,2)$$
    can be realized geometrically. Consider an arrangement $\mathcal{L}_{6} \subset \mathbb{P}^{2}_{\mathbb{R}}$ that is given by
    $$Q(x,y,z) = y(x-z)(y-x-2z)(y+x-2z)(y-x+2z)(y+x+2z).$$
    Obviously $\mathcal{L}_{6}$ has $n_{2}=9$, $n_{3}=2$, and using \verb}SINGULAR} one can check that ${\rm mdr}(Q)=3$, hence $\mathcal{L}_{6}$ is a minimal plus-one generated arrangement.
    \item[$d=7$:] Analogously as above,
    $$\frac{7}{2} \geq {\rm mdr}(f) \geq \frac{2}{3}\cdot 7-2,$$
    which implies that ${\rm mdr}(f) =3$. Then the following system of equalities holds:\[
    \begin{cases}
      n_{2} + 4n_{3} = 25  \\
      n_{2} + 3n_{3} = \binom{7}{2}=21
    \end{cases},\]
    which gives us that $n_{3}=4$ and $n_{2}=9$.
    By Proposition \ref{triple}, one has
    $$n_{3} \geq 4,$$
    so we have to check whether the weak-combinatorics 
    $$(d;n_{2},n_{3}) = (7,9,4)$$
    can be realized geometrically. Consider an arrangement $\mathcal{L}_{7} \subset \mathbb{P}^{2}_{\mathbb{R}}$ that is given by
    $$Q(x,y,z) = xy(x-z)(y-x-2z)(x+y-2z)(y-x+2z)(x+y+2z).$$
    Obviously $\mathcal{L}_{7}$ has $n_{2}=9$, $n_{3}=2$, and using \verb}SINGULAR} one can check that ${\rm mdr}(Q)=3$, hence $\mathcal{L}_{7}$ is a minimal plus-one generated arrangement.
    \item[$d=8$:] We have
    $$\frac{8}{2} \geq {\rm mdr}(f) \geq \frac{2}{3}\cdot 8-2,$$
    which implies that ${\rm mdr}(f) =4$. Then the following system of equalities holds:\[
    \begin{cases}
      n_{2} + 4n_{3} = 35  \\
      n_{2} + 3n_{3} = \binom{8}{2}=28
    \end{cases},\]
    which gives us that $n_{3}=7$ and $n_{2}=7$.
    By Proposition \ref{triple}, one has
    $$n_{3} \geq 6,$$
    which means that we need to check whether the weak-combinatorics 
    $$(d;n_{2},n_{3}) = (8;7,7)$$
    can be realized geometrically. Consider an arrangement $\mathcal{L}_{8} \subset \mathbb{P}^{2}_{\mathbb{R}}$ that is given by
    $$Q(x,y,z) = y(y-2z)(y+2z)(y-x-2z)(y+x-2z)(y-x+2z)(y+x+2z)(y+x-6z).$$
    Obviously $\mathcal{L}_{8}$ has $n_{2}=7$, $n_{3}=7$, and using \verb}SINGULAR} one can check that ${\rm mdr}(Q)=4$, hence $\mathcal{L}_{8}$ is a minimal plus-one generated arrangement.
    \item[$d=9$:] We have
    $$\frac{9}{2} \geq {\rm mdr}(f) \geq \frac{2}{3}\cdot 9-2,$$
    which implies that ${\rm mdr}(f) = 4$. Then the following system of equations holds:\[
    \begin{cases}
      n_{2} + 4n_{3} = 46  \\
      n_{2} + 3n_{3} = \binom{9}{2}=36
    \end{cases},\]
    which gives us that $n_{3}=10$ and $n_{2}=6$.
    By Proposition \ref{triple} one has also that
    $$n_{3} \geq 10,$$
    which means that we have to check whether the weak-combinatorics 
    $$(d;n_{2},n_{3}) = (9;6,10)$$
    can be realized geometrically. Consider an arrangement $\mathcal{L}_{9} \subset \mathbb{P}^{2}_{\mathbb{R}}$ that is given by
    $$Q(x,y,z) = y(y-z)(y+z)(y-x)(y+x)(y-x-2z)(y-x+2z)(y+x-2z)(y+x+2z).$$
    Obviously $\mathcal{L}_{9}$ has $n_{2}=6$, $n_{3}=10$, and using \verb}SINGULAR} one can check that ${\rm mdr}(Q)=4$, hence $\mathcal{L}_{9}$ is a minimal plus-one generated arrangement.
\end{enumerate}
This completes our classification.
\end{proof}
Finishing this section, let us just comment one important property that $\mathcal{L}_{9}$ possesses, namely the realization space of $\mathcal{L}_{9}$ is one-dimensional\footnote{We thank Lukas K\"uhne for pointing this out.}. In other words, the following polynomial
\begin{multline}
\label{3net}
Q_{t}(x,y,z) = xyz(y+z)(x+y+z)(x+ty)((t-1)x+(t-1)y+tz)\\ ((t-1)x+tz)((t-1)x+t(t-1)y+t^2z)
\end{multline} 
with $t \in \mathbb{C} \setminus \bigg\{0,1, \frac{1}{2} \pm \frac{\sqrt{3}\iota}{2}\bigg\}$
defines a one-dimensional family of line arrangements $\mathcal{L}_{t}$ such for each admissible $t$ arrangements $\mathcal{L}_{t} : Q_{t}(x,y,z)=0$ have the same intersection lattice (= isomorphic intersection lattice) as $\mathcal{L}_{9}$. In fact, $\mathcal{L}_{t}$ has a $3$-net structure and due to this reason it has {\it non-trivial monodromy}, please consult \cite[Theorem 2.2]{M} for details.
\section{MPOG arrangements with $d=9$ lines}
In this short section we describe a new weak Ziegler pair that showed up during our investigations.
Our main result provides a complete classification of weak-combinatorics minimal plus-one generated line arrangements with only double and triple intersection points and it is very natural to wonder whether we can say more about geometric realizations of these arrangements. Here we want to focus on the weak-combinatorics $$(d;n_{2},n_{3}) = (9;6,10).$$
Using the database of matroids \cite{mat}, we can learn that there are exactly $4$ non-isomorphic matroids having the prescribed weak-combinatorics, but only two can be realized geometrically over the complex numbers. The first combinatorics can be realized by $\mathcal{L}_{9}$, defined in the previous section, and the second one is going to be described right now.
Let $\mathcal{L}_{9}' \subset \mathbb{P}^{2}_{\mathbb{C}}$ be the arrangement given by
$$Q(x,y,z) =  xyz(x+y)(x+z)(y-z)(x+y+ez)(x-ey+z)(x-ey+ez),$$
where $e^{2}+1=0$. The line arrangement $\mathcal{L}_{9}'$ has the same weak-combinatorics as $\mathcal{L}_{9}$, but these two line arrangements are not isomorphic since the realization space of $\mathcal{L}_{9}'$ is zero-dimensional, i.e. it consists of just two points determined by the equation $e^{2}+1=0$. Using \verb}SINGULAR} we can compute that ${\rm mdr}(\mathcal{L}_{9}')=5$ which means that $\mathcal{L}_{9}'$ cannot be minimal plus-one generated. This observation leads us to the following structural result.
\begin{theorem}
A line arrangement $\mathcal{L} \subset \mathbb{P}^{2}_{\mathbb{C}}$ with $d=9$ lines and only double and triple points as intersections is minimal plus-one generated if and only if $\mathcal{L}$ is the $3$-net $\mathcal{L}_{t}$ defined by equation \eqref{3net}.    
\end{theorem}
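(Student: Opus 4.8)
The plan is to bootstrap from the weak-combinatorial classification already proved, the enumeration of realizable matroids recorded in \cite{mat}, and the mdr-constraint that characterizes minimal plus-one generation. The statement is an equivalence, so I would treat the two implications separately, with essentially all the work concentrated in the forward direction.

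For the forward implication I would first invoke the classification theorem of Section $4$: any minimal plus-one generated arrangement of nine lines with only nodes and triple points must have weak-combinatorics $(9;6,10)$. This restricts attention to arrangements realizing this single weak-combinatorics. I would then appeal to the matroid database \cite{mat}, which lists exactly four abstract matroids compatible with $(9;6,10)$, of which only two are realizable over $\mathbb{C}$, namely those of $\mathcal{L}_9$ and $\mathcal{L}_9'$. Hence any such $\mathcal{L}$ is lattice-isomorphic to one of these two.

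The decisive step is to eliminate the $\mathcal{L}_9'$ lattice. Because $\mathcal{L}$ is minimal plus-one generated with $d=9$, the inequality $r = {\rm mdr}(f) \leq d/2$ established in Section $3$ forces $r \leq 4$. A \verb}SINGULAR} computation gives ${\rm mdr}(\mathcal{L}_9') = 5$; since this value is constant on the relevant realization stratum, no arrangement with the $\mathcal{L}_9'$ combinatorics can satisfy $r \leq 4$, so none is minimal plus-one generated. It follows that $\mathcal{L}$ is lattice-isomorphic to $\mathcal{L}_9$, whose realization space is the one-dimensional family $\mathcal{L}_t$ of \eqref{3net}; thus $\mathcal{L}$ coincides with $\mathcal{L}_t$ for some admissible $t$. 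For the converse I would check that every admissible $\mathcal{L}_t$ is indeed minimal plus-one generated: it has $\tau = n_2 + 4n_3 = 46$ and ${\rm mdr} = 4$, so that $r^2 - r(d-1) + (d-1)^2 = 16 - 32 + 64 = 48 = \tau + 2$, which is exactly the criterion of Theorem \ref{MPOG}.

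The main obstacle I anticipate is not the algebra but the two combinatorial and computational inputs underpinning the argument: the database assertion that precisely two of the four matroids are complex-realizable, and the claim that ${\rm mdr}$ is constant across each realization stratum. Since ${\rm mdr}$ is \emph{a priori} an invariant of the defining polynomial rather than of the intersection lattice, the delicate point is to justify that the values $4$ for the $\mathcal{L}_9$ lattice and $5$ for the $\mathcal{L}_9'$ lattice are genuinely forced by the combinatorics, either by a direct argument or by verifying them symbolically over the whole family $\mathcal{L}_t$ rather than at a single representative.
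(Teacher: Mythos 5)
Your proposal takes essentially the same route as the paper: reduce to the weak-combinatorics $(9;6,10)$ via the classification theorem, use the matroid enumeration to see there are exactly two complex-realizable lattices, discard $\mathcal{L}_{9}'$ because ${\rm mdr}(\mathcal{L}_{9}')=5$ exceeds the bound $r\leq d/2$, and verify that every admissible $\mathcal{L}_{t}$ satisfies ${\rm mdr}=4$ and hence the criterion $r^{2}-r(d-1)+(d-1)^{2}=\tau+2$. The delicate point you flag --- that ${\rm mdr}$ must be controlled over the whole realization stratum rather than at one representative --- is handled in the paper exactly as you suggest, by running the \texttt{SINGULAR} syzygy computation over the function field $\mathbb{Q}(t)$ so that the degree-$4$ syzygy is certified for the entire one-parameter family.
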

\begin{proof}
Since line arrangements $\mathcal{L}_{t}$ have the same weak-combinatorics and the total Tjurina number for line arrangements is determined by the weak-combinatorics, i.e., for every admissible $t$ we have $\tau(\mathcal{L}_{t})=46$, our problem boils down to showing that for every admissible $t$ one has ${\rm mdr}(\mathcal{L}_{t}) = 4$. This can be done using the following \verb}SINGULAR} routine, namely
\begin{verbatim}
ring R=(0,t),(x,y,z),(c,dp);
poly f = x*y*z*(y+z)*(x+y+z)*(x+t*y)*((t-1)*x+(t-1)*y+t*z)*((t-1)*x+t*z)
*((t-1)*x+t*(t-1)*y+t^2*z);
ideal I = jacob(f);
deg(syz(I)[1]);
\end{verbatim}
This procedure allows us to check that the resulting minimal syzygy is indeed of degree $4$ for each admissible parameter $t$. 

Using this preparation, we can finish our proof. Observe that one implication follows exactly from our discussion above since for every admissible $t$ arrangement $\mathcal{L}_{t}$ is minimal plus-one generated. The other implication follows from our discussion on the weak-combinatorics of arrangements with $d=9$ lines and only double and triple points. Since we have exactly two non-isomorphic realizations of the weak-combinatorics $(d;n_{2},n_{3}) = (9;6,10)$, and one realization is not minimal plus-one generated, this finishes the proof.
\end{proof}
Our last observation revolves around the notion of weak Ziegler pairs that were defined in \cite{Pokora}.

\begin{definition}[Weak Ziegler pair]
Consider two reduced plane curves $C_{1}, C_{2} \subset \mathbb{P}^{2}_{\mathbb{C}}$ such that all irreducible components of $C_{1}$ and $C_{2}$ are smooth. We say that a pair $(C_{1},C_{2})$ forms a \emph{weak Ziegler pair} if $C_{1}$ and $C_{2}$ have the same weak-combinatorics, but they have different minimal degrees of non-trivial Jacobian relations, i.e., ${\rm mdr}(C_{1}) \neq {\rm mdr}(C_{2})$.
\end{definition}
\begin{proposition}
Line arrangements $\mathcal{L}_{9}$ and $\mathcal{L}_{9}'$ form a weak Ziegler pair.
\end{proposition}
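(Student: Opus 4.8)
The plan is to verify directly that the two arrangements $\mathcal{L}_9$ and $\mathcal{L}_9'$ satisfy the definition of a weak Ziegler pair, which decomposes into three checks: both curves have smooth irreducible components, both share the same weak-combinatorics, and their minimal degrees of relation differ. The first check is immediate, since each of $\mathcal{L}_9$ and $\mathcal{L}_9'$ is a line arrangement and lines are smooth. For the second check, I would record that both arrangements are prescribed to have weak-combinatorics $(d;n_2,n_3) = (9;6,10)$: for $\mathcal{L}_9$ this was established in Section 4 (where the defining polynomial $Q$ was shown to realize $n_2 = 6$, $n_3 = 10$), and for $\mathcal{L}_9'$ it was asserted in the present section that it has the same weak-combinatorics.

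The third and decisive check is that the minimal degrees of relation differ. Here I would simply invoke the two symbolic computations already carried out in the excerpt. For $\mathcal{L}_9$, the classification theorem established ${\rm mdr}(\mathcal{L}_9) = 4$ (indeed this was the content of verifying that $\mathcal{L}_9$ is minimal plus-one generated with $r = d_1 = 4$). For $\mathcal{L}_9'$, the discussion preceding this proposition noted that ${\rm mdr}(\mathcal{L}_9') = 5$, computed in \verb}SINGULAR}, which is precisely why $\mathcal{L}_9'$ fails to be minimal plus-one generated (its $r$ exceeds $d/2 = 4.5$). Since $4 \neq 5$, the two arrangements have different minimal degrees of non-trivial Jacobian relation.

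Assembling these observations completes the argument: the components are smooth, the weak-combinatorics coincide at $(9;6,10)$, yet ${\rm mdr}(\mathcal{L}_9) = 4 \neq 5 = {\rm mdr}(\mathcal{L}_9')$, so by definition $(\mathcal{L}_9, \mathcal{L}_9')$ is a weak Ziegler pair. I do not anticipate a genuine obstacle here, as every ingredient has already been assembled earlier in the paper; the only real content is the pair of $\verb}mdr}$ values, both of which rest on the prior symbolic computations. The main point worth emphasizing in the write-up is that the two arrangements are genuinely non-isomorphic (their realization spaces have different dimensions, one-dimensional versus zero-dimensional), so this is a substantive example rather than a degenerate one, underscoring that the differing ${\rm mdr}$ values are not an artifact of a coincidental combinatorial match.
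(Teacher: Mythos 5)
Your proposal is correct and follows essentially the same route as the paper, which likewise just recalls ${\rm mdr}(\mathcal{L}_{9})=4$ and ${\rm mdr}(\mathcal{L}_{9}')=5$ from the earlier \verb}SINGULAR} computations and concludes from the definition. Your version merely makes explicit the remaining (immediate) checks of smoothness of components and equality of weak-combinatorics, which the paper leaves implicit.
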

\begin{proof}
Recall that ${\rm mdr}(\mathcal{L}_{9})=4$ and ${\rm mdr}(\mathcal{L}_{9}')=5$, and this completes our justification.
\end{proof}
\section*{Acknowledgments}
The author would like to thank Piotr Pokora for his guidance.

Artur Bromboszcz is supported by the National Science Centre (Poland) Sonata Bis Grant  \textbf{2023/50/E/ST1/00025}. For the purpose of Open Access, the author has applied a CC-BY public copyright license to any Author Accepted Manuscript (AAM) version arising from this submission.

\vskip 0.5 cm
%***************************************************************************** % Addresses

Artur Bromboszcz \\
Department of Mathematics,
University of the National Education Commission Krakow,
Podchor\c a\.zych 2,
PL-30-084 Krak\'ow, Poland. \\
\nopagebreak
\textit{E-mail address:} \texttt{artur.bromboszcz@uken.krakow.pl}
\bigskip
\end{document}